\theoremstyle{plain}
\newtheorem{theorem}{Theorem}[section]
\newtheorem{corollary}[theorem]{Corollary}
\newtheorem{proposition}[theorem]{Proposition}
 \title{On the Lindel\"{o}f Hypothesis for the Riemann Zeta function and Piltz divisor problem }
\author{Lahoucine Elaissaoui\footnote{Department of mathematics, Faculty of sciences, Mohammed V University in Rabat, 4 Street Ibn Battouta B.P. 1014 RP, Rabat, e-mail: \texttt{l.elaissaoui@um5r.ac.ma}; \texttt{lahoumaths@gmail.com}; ORCID: 0000--0003--4600--4202}}
\begin{document}

\maketitle


\vspace*{1cm}

\begin{abstract}
In order to well understand the behaviour of the Riemann zeta function inside the critical strip, we show; among other things, the Fourier expansion of the $\zeta^k(s)$ ($k \in \mathbb{N}$) in the half-plane $\Re s > 1/2$ and we deduce a necessary and sufficient condition for the truth of the Lindel\"{o}f Hypothesis. Moreover, if $\Delta_k$denotes the error term in the Piltz divisor problem then for almost all $x\geq 1$ and any given $k \in \mathbb{N}$ we have
$$\Delta_k(x) = \lim_{\rho \to 1^-}\sum_{n=0}^{+\infty}(-1)^n\ell_{n,k}L_n\left(\log(x)\right)\rho^n  $$
where $(\ell_{n,k})_{n}$ and $L_n$ denote, respectively, the Fourier coefficients of $\zeta^k(s)$ and Laguerre polynomials. 
\end{abstract}

\textbf{ MSC 2020.}{Primary  11M06, 30B10, 30B40, 30B50, 30B30, 11N56; Secondary 11B65}

\textbf{Keywords.}{ Riemann zeta function, Lindel\"{o}f Hypothesis, Divisor problems, Stieltjes constants, Fourier series}

\section{Fourier expansion of powers of the Riemann zeta function}
\subsection{Introduction and statements}
\label{}
 The Lindel\"{o}f Hypothesis is a significant open problem in analytic number theory that concerns the growth of the Riemann zeta function $\zeta(s)$ on the critical line, $\Re s = 1/2.$ We recall that $\zeta(s)$ is initially defined for any complex number $s=\sigma + it$ in the half-plane $\sigma > 1$ by the Dirichlet series $\zeta(s) = \sum_{n\geq 1} 1/n^s$ and extends analytically, by its integral representation
 \begin{equation}
 \zeta(s) = \frac{s}{s-1} - s\int_1^{+\infty} \frac{\{x\}}{x^{s+1}} \mathrm{d}x,  
 \label{intrep}
 \end{equation}
 where $\{\cdot\}$ denotes the fractional part function, and the functional equation \cite[p. 16]{Titch}
 \begin{equation} 
\zeta(s) = \chi(s)\zeta(1-s) \quad \mbox{where} \quad \chi(s) = \pi^{s-\frac{1}{2}}\frac{\Gamma\left(\frac{1-s}{2} \right)}{\Gamma\left( \frac{s}{2} \right)}
\label{funeq}
 \end{equation}  
($\Gamma$ is the well-known Euler gamma function), to the whole complex plane except for a simple pole at $s=1.$ Thus, it is clear that $\zeta(s)$ is bounded in any half-plane $\sigma \geq \sigma_0> 1;$ and by the functional equation \eqref{funeq}, since for any bounded $\sigma$  we have \cite[p. 78]{Titch} $$|\chi(s)| \sim  \left(\frac{|t|}{2\pi}\right)^{\frac{1}{2}-\sigma}\quad \mbox{as} \quad |t| \to \infty,$$
then for all $ \sigma \leq 1-\sigma_0<0,$
$$\zeta(s) = O\left( |t|^{\frac{1}{2}-\sigma}\right) . $$
However, the order of $\zeta(s)$ inside the critical strip $0 < \sigma < 1$ is not completely understood. The Phragm\'{e}n-Lindel\"{o}f principle \cite[§9.41]{Titch2} implies that if $\zeta\left( \frac{1}{2}+it \right)=O\left(|t|^{\kappa+\varepsilon}\right),$ for any $\varepsilon >0,$ then we have
$$\zeta(s) = O\left(|t|^{2 (1-\sigma)\kappa+\varepsilon} \right), \quad \forall \varepsilon > 0, $$
 uniformly in the strip $1/2\leq \sigma < 1;$ and the order of the Riemann zeta function in the strip $0<\sigma \leq 1/2$ follows from the functional equation \eqref{funeq}. Notice that, the optimal value of $\kappa$ is not known and the best value obtained to date is due to Bourgain \cite{Bour}, that is $\kappa = 13/84;$ however, the yet unproved Lindel\"{o}f Hypothesis states that $\kappa=0.$ Actually, there are several equivalent statements to the Lindel\"{o}f Hypothesis, see for example \cite[p. 320]{Titch} and \cite{HarLit}; in particular, by combining Theorems 12.5 and 13.4 in \cite{Titch}, the Lindel\"{o}f Hypothesis holds true if and only if the integral
 \begin{equation}
 \frac{1}{2\pi} \int_{\Re s = \frac{1}{2}}\frac{|\zeta(s)|^{2k}}{|s|^2} |\mathrm{d}s| 
 \label{Nrmz}
 \end{equation}
converges for any $k \in \mathbb{N}.$ 

Recently, the author and Guennoun showed in \cite{Ela} that the values of the Riemann zeta function in the half-plane $\sigma \geq 1/2$ are encoded in the binomial transform of the Stieltjes constants $(\gamma_j)_{j\geq 0}$ (see for example \cite{Ber}); namely, for all $\sigma \geq 1/2,$ $s\neq 1,$ we have 
 \begin{equation}
 \zeta(s) = \frac{s}{s-1}+ \sum_{n=0}^{+\infty} (-1)^n\ell_n \left( \frac{s-1}{s} \right)^n 
 \label{fourexp}
 \end{equation}
 where $\ell_0=\gamma_0-1$ and $$\ell_n = \sum_{j=1}^n \binom{n-1}{j-1}\frac{(-1)^{n-j}}{j!}\gamma_j \qquad n \in \mathbb{N} $$
 is a square-summable sequence. Hence, one can deduce the estimation of the Riemann zeta function in the half-plane $\sigma > 1/2$ by studying the growth of the Fourier coefficients $(\ell_n)_{n \in \mathbb{N}_0};$ in particular, if $\ell_n = O(n^{-1+\varepsilon})$ for all $\varepsilon > 0$ as $n\to+\infty$ then the Lindel\"{o}f Hypothesis holds true. Notice that an other proof of \eqref{fourexp}, for $\sigma > 1/2,$ has been given by the author in \cite{Ela2} by proving that  $((-1)^{n-1}\ell_n)_{n \geq 0}$ are the Fourier-Laguerre coefficients of the fractional part function, $\{\cdot\},$ in the Hilbert space $$\mathcal{H}_0:= \left\{ f:(1,+\infty) \rightarrow \mathbb{C}, \quad \int_1^{+\infty}|f(x)|^2\mathrm{d}w(x) < +\infty\right\}, \quad \left( \mathrm{d}w(x) = \frac{\mathrm{d}x}{x^2}\right)$$    
associated with the orthonormal basis $(\mathcal{L}_j)_{j \in \mathbb{N}_0},$ where for each $j \in \mathbb{N}_0,$ $\mathcal{L}_j(x) = L_j(\log(x))$ and $(L_j)$ are the classical Laguerre polynomials \cite{Sz}; with respect to the inner product
$$\langle f,g \rangle = \int_1^{+\infty} f(x)\overline{g(x)}\ \mathrm{d}w(x), \qquad  f,g \in \mathcal{H}_0. $$
More generally, let for all $|s-1|\leq 1$ and any given $k \in \mathbb{N}$
\begin{equation}
(s-1)^k\zeta^k(s) = \sum_{j=0}^{+\infty} \frac{\lambda_{j,k}}{j!}(s-1)^j ; 
\label{Laurexpk}
\end{equation}
be the Taylor expansion of the regular function $(s-1)^k\zeta^k(s)$ near to $s=1,$ then the rational expansion of $\zeta^k(s),$ which can be considered as a generalization of \eqref{fourexp}, is given in the following theorem.
\begin{theorem}
For any given $k \in \mathbb{N}$ and for all complex number $s=\sigma + it \neq 1$ in the half-plane $\sigma > 1/2,$ we have
$$\zeta^k(s) = \sum_{n \geq -k}(-1)^n \ell_{n,k} \left( \frac{s-1}{s}\right)^n; $$
where
$$\ell_{n,k} := \begin{cases}\begin{array}{cl} \displaystyle (-1)^n \sum_{j=1}^n\binom{n-1}{j-1}\frac{\lambda_{j+k,k}}{(j+k)!} &\mbox{if} \quad n \geq 1, \\  \displaystyle (-1)^k \sum_{j=0}^{k+n}\binom{k-j}{-n} (-1)^j \frac{\lambda_{j,k}}{j!} &\mbox{if} \ -k \leq n \leq 0. \end{array}\end{cases} $$
\label{Th1}
\end{theorem}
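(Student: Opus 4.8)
The plan is to transport the problem to the unit disc by the M\"obius substitution
\[w=\frac{s-1}{s}=1-\frac1s,\]
which maps the half-plane $\Re s>1/2$ conformally onto the open disc $|w|<1$, sends $s=1$ to $w=0$, and satisfies $s-1=\frac{w}{1-w}$ and $\frac{s}{s-1}=\frac1w$. First I would observe that, rewriting \eqref{fourexp} (proved in \cite{Ela,Ela2}) in the variable $w$ and using that $(\ell_n)$ is square-summable, hence bounded, the function
\[w\,\zeta(s)=1+\sum_{m\ge 1}(-1)^{m-1}\ell_{m-1}\,w^{m}\]
is holomorphic on the whole disc $|w|<1$. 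Raising to the $k$th power, $w^{k}\zeta^{k}(s)=\bigl(w\,\zeta(s)\bigr)^{k}$ is holomorphic on $|w|<1$ as well, so $\zeta^{k}(s)$ has, around $w=0$, a Laurent expansion $\sum_{n\ge -k}c_{n}w^{n}$ converging for $0<|w|<1$; this already yields the announced shape of the expansion, and it remains only to identify $c_{n}$ with $(-1)^{n}\ell_{n,k}$.

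To compute the coefficients $c_{n}$ I would feed in the Taylor expansion \eqref{Laurexpk}. Since $(1-w)^{k}(s-1)^{k}=w^{k}$, multiplying \eqref{Laurexpk} by $(1-w)^{k}$ yields, for $w$ in a neighbourhood of $0$ (more precisely for $\Re w<1/2$, where $|s-1|=\bigl|\tfrac{w}{1-w}\bigr|<1$),
\[w^{k}\zeta^{k}(s)=\sum_{j\ge 0}\frac{\lambda_{j,k}}{j!}\,w^{j}(1-w)^{k-j}.\]
Expanding each factor $(1-w)^{k-j}$ --- a polynomial when $j\le k$, the binomial series $\sum_{i\ge 0}\binom{j-k-1+i}{i}w^{i}$ when $j>k$ --- and rearranging the double sum into a single power series in $w$ is legitimate by absolute convergence on a small disc about the origin; since the left-hand side is holomorphic on all of $|w|<1$, the resulting Taylor series converges there, and therefore
\[c_{n}=[w^{n+k}]\sum_{j\ge 0}\frac{\lambda_{j,k}}{j!}\,w^{j}(1-w)^{k-j}\qquad(n\ge -k).\]

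The remaining step is the combinatorial extraction of this coefficient, which splits according to the sign of $n$. For $n\ge 1$ the terms with $j\le k$ contribute nothing, since $w^{j}(1-w)^{k-j}$ is then a polynomial of degree $k<n+k$; for $j>k$ the surviving range is $k<j\le n+k$, and a short computation with the binomial series gives $c_{n}=\sum_{j=1}^{n}\binom{n-1}{j-1}\frac{\lambda_{j+k,k}}{(j+k)!}$. For $-k\le n\le 0$ the terms with $j>k$ contribute nothing, since $w^{j}(1-w)^{k-j}$ is then divisible by $w^{j}$ with $j>k\ge n+k$; for $j\le k$, expanding the polynomial $(1-w)^{k-j}$ and using $\binom{k-j}{n+k-j}=\binom{k-j}{-n}$ leaves the range $0\le j\le n+k$ and gives $c_{n}=(-1)^{n+k}\sum_{j=0}^{n+k}(-1)^{j}\binom{k-j}{-n}\frac{\lambda_{j,k}}{j!}$. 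In both cases this equals $(-1)^{n}\ell_{n,k}$, which would complete the argument. I expect the only genuine obstacle to be the rearrangement above: one must make sure that the formal power series obtained by multiplying out the right-hand side really is the Taylor expansion, on the whole disc $|w|<1$, of the single holomorphic function $w^{k}\zeta^{k}(s)$, rather than just a local identity near $w=0$; but once holomorphy on $|w|<1$ has been secured through the $k=1$ case this is automatic, and the rest is bookkeeping, with the reassuring consistency checks that $c_{-k}=\lambda_{0,k}=1$ and that $k=1$ recovers $\ell_{n,1}=\ell_{n}$ and hence \eqref{fourexp}.
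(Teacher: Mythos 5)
Your proposal is correct, and after the shared opening move (raising the $k=1$ expansion \eqref{fourexp} to the $k$th power in the variable $w=(s-1)/s$ to secure holomorphy of $w^k\zeta^k$ on the unit disc and hence the shape $\sum_{n\ge-k}c_nw^n$), it identifies the coefficients by a genuinely different and leaner route than the paper. The paper splits $\zeta^k(s)$ into its principal part plus $F_k(s)=s\int_1^\infty\Delta_k(x)x^{-s-1}\,\mathrm{d}x$, expands the principal part via Lavrik's formula \eqref{Lavrik} for the $a_{j,k}$ to get the range $-k\le n\le-1$, and then computes the derivatives $F_k^{(j)}(1)$ in two ways and inverts a binomial transform to get $n\ge1$; this requires the Piltz divisor machinery ($\Delta_k$, the average order $\sigma_k$) and treats the two ranges of $n$ by separate mechanisms. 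You instead substitute $s-1=w/(1-w)$ directly into the entire-function Taylor expansion \eqref{Laurexpk}, obtaining $w^k\zeta^k(s)=\sum_{j\ge0}\frac{\lambda_{j,k}}{j!}w^j(1-w)^{k-j}$ near $w=0$, and read off $c_{n}=[w^{n+k}]$ by elementary binomial bookkeeping, handling both ranges of $n$ uniformly; the rearrangement is justified by absolute convergence on $|w|<1/2$, and the extension to all of $|w|<1$ is automatic by uniqueness of Taylor coefficients once holomorphy on the disc is known. I checked your two coefficient extractions and they do reproduce $(-1)^n\ell_{n,k}$ exactly (including the endpoint checks $c_{-k}=\lambda_{0,k}=1$ and the reduction to $\ell_n$ at $k=1$). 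What your approach buys is independence from the divisor-sum representation \eqref{intrepzk} and from Lavrik's formula altogether; what the paper's approach buys is the by-products it later exploits, namely the analytic continuation of $F_k$ and the integral representation of the $\ell_{n,k}$ against $\Delta_k$ in Proposition \ref{propint}, which do not fall out of your computation.
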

Remark that the series in the theorem above is absolutely convergent for all $\sigma > 1/2$ ($s\neq 1$). Moreover, one can obtain the expression of $(\lambda_{j,k})_{j \in \mathbb{N}_0},$ for each $k\in \mathbb{N},$ in terms of Stieltjes constants by applying Cauchy product, \cite[p. 32]{Titch2}, to the absolutely convergent series 
$$(s-1)\zeta(s) =  \sum_{j=0}^{+\infty} \frac{\lambda_j}{j!}(s-1)^j; \qquad |s-1| \leq 1  $$  
where $\lambda_0=1$ and $\lambda_j=(-1)^{j-1}j\gamma_{j-1}$ for $j\in \mathbb{N}.$ Namely, we have $\lambda_{j,1}:=\lambda_j$ for all $j\in \mathbb{N}_0$ and
$$\lambda_{j,k} = \sum_{i=0}^j \binom{j}{i}\lambda_{i,k-1}\lambda_{j-i}, \quad k\geq 2 $$
or equivalently, 
$$\lambda_{0,k}=1 \quad \mbox{and} \quad \lambda_{j,k} =\frac{1}{j} \sum_{i=1}^{j}\binom{j}{i}(ik-j+i)\lambda_{j-i,k}\lambda_i  \quad j\in \mathbb{N}, $$
where $\binom{j}{i} = j!/(i!(j-i)!)$ if $i\in [|0,j|]$ ($j \in \mathbb{N}_0$) and equals $0$ otherwise. Thus, since $|\lambda_j|\leq (\gamma_0)^{j}j!$ for all $j\in \mathbb{N}_0$ then for any given $k \in \mathbb{N}$
$$\frac{|\lambda_{j,k}|}{j!} \leq (\gamma_0)^j\binom{j+k-1}{k-1}; $$
which implies the absolute convergence of the series \eqref{Laurexpk} for all $|s-1|\leq 1.$  

Now, let $\beta_k$ be the order of the sequence $(\ell_{n,k})_n;$ i.e. the least real number such that $\ell_{n,k} = O(n^{\beta_k+\varepsilon})$ for all $\varepsilon > 0$ as $n \to +\infty,$ then it follows by Theorem \ref{Th1} that, for all $\sigma > 1/2$ and $|t|\geq 1$
$$\zeta(s) = O\left( \frac{|s|^{\frac{2}{k}(\beta_k+1)+\varepsilon}}{\left(\sigma - \frac{1}{2}\right)^{\frac{1}{k}}}\right), \qquad \forall\varepsilon > 0. $$
Notice that $-1 \leq \beta_k \leq k(\beta_1+1)-1$ for any given $k \in \mathbb{N};$ hence, the fact that $\beta_1=-1$ implies the Lindel\"{o}f Hypothesis. More generally, if $\limsup|\beta_k/k|=0$ then the Lindel\"{o}f Hypothesis holds true; also, the converse is true thanks to the following corollary.   

\begin{corollary}
The Lindel\"{o}f Hypothesis is true if and only if $(\ell_{n,k})_{n \in \mathbb{N}_0}$ are square-summable sequences for all $k \in \mathbb{N}.$
\label{Coro}
\end{corollary}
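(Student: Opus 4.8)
The plan is to obtain Corollary~\ref{Coro} by combining Theorem~\ref{Th1} with the classical criterion recalled in the introduction: the Lindel\"of Hypothesis is true if and only if, for every $k\in\mathbb{N}$, the integral
$$I_k:=\frac{1}{2\pi}\int_{\Re s=\frac{1}{2}}\frac{|\zeta(s)|^{2k}}{|s|^2}\,|\mathrm{d}s|$$
converges (the combination of Theorems 12.5 and 13.4 of \cite{Titch}). The necessity half of the corollary --- if the Lindel\"of Hypothesis holds, then each sequence $(\ell_{n,k})_{n\in\mathbb{N}_0}$ is square-summable --- is already \cite[Th.~4.6]{Ela}, so I would only need to prove sufficiency; and by the criterion it is enough to show that, for each fixed $k$, square-summability of $(\ell_{n,k})_{n\in\mathbb{N}_0}$ forces $I_k<\infty$.

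The key step would be the M\"obius substitution $w=\frac{s-1}{s}=1-\frac{1}{s}$, which maps $\{\Re s>\tfrac{1}{2}\}$ conformally onto the open unit disc $\mathbb{D}=\{|w|<1\}$ with inverse $s=\frac{1}{1-w}$, sends $s=1$ to $w=0$ and $s=\infty$ to $w=1$, and carries the critical line $\Re s=\tfrac{1}{2}$ onto the unit circle $\mathbb{T}$. Along the critical line $|\mathrm{d}w|=|\mathrm{d}s|/|s|^2$, so $I_k=\frac{1}{2\pi}\int_{\mathbb{T}}|g(w)|^2\,|\mathrm{d}w|$ with $g(w):=\zeta^k\!\left(\tfrac{1}{1-w}\right)$. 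By Theorem~\ref{Th1}, $g$ is given on $0<|w|<1$ by the absolutely convergent Laurent series $g(w)=\sum_{n\ge-k}(-1)^n\ell_{n,k}\,w^n$. I would also record that, since $\zeta$ is holomorphic on $\mathbb{C}\setminus\{1\}$ while $w\mapsto\frac{1}{1-w}$ is holomorphic near every $w_0\in\mathbb{T}\setminus\{1\}$ and maps such a point into the critical line (which avoids $1$), the function $g$ continues holomorphically, hence continuously, across $\mathbb{T}\setminus\{1\}$; in particular it is continuous on $\overline{\mathbb{D}}\setminus\{0,1\}$.

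Then, fixing $k$ and assuming $\sum_{n\ge0}|\ell_{n,k}|^2<\infty$ (adjoining the finitely many finite coefficients of negative index, $\sum_{n\ge-k}|\ell_{n,k}|^2<\infty$), I would introduce the dilates $g_r(w):=g(rw)$ for $0<r<1$. For $w\in\mathbb{T}$ the Laurent series yields the uniformly convergent Fourier expansion $g_r(e^{i\theta})=\sum_{n\ge-k}(-1)^n\ell_{n,k}\,r^{\,n}e^{in\theta}$, so by Parseval's identity
$$\frac{1}{2\pi}\int_{\mathbb{T}}|g_r(w)|^2\,|\mathrm{d}w|=\sum_{n\ge-k}|\ell_{n,k}|^2\,r^{2n}\ \longrightarrow\ \sum_{n\ge-k}|\ell_{n,k}|^2<\infty\qquad(r\to 1^-),$$
while the continuity of $g$ on $\overline{\mathbb{D}}\setminus\{0,1\}$ gives $g_r(w)\to g(w)$ as $r\to 1^-$ for every $w\in\mathbb{T}\setminus\{1\}$, i.e. almost everywhere on $\mathbb{T}$. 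Fatou's lemma then yields
$$I_k=\frac{1}{2\pi}\int_{\mathbb{T}}|g(w)|^2\,|\mathrm{d}w|\ \le\ \liminf_{r\to 1^-}\frac{1}{2\pi}\int_{\mathbb{T}}|g_r(w)|^2\,|\mathrm{d}w|\ =\ \sum_{n\ge-k}|\ell_{n,k}|^2<\infty.$$
Carrying this out for every $k$ and invoking the criterion would give the Lindel\"of Hypothesis, completing the proof.

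I expect the only genuine subtlety to be the behaviour at the single exceptional point $w=1$: there $g$ does not extend continuously (it corresponds to $s=\infty$, where $\zeta^k$ on the critical line merely oscillates), so one cannot argue naively with boundary values --- which is precisely what the passage to the dilates $g_r$ and the use of Fatou's lemma circumvent, and this route has the pleasant feature of needing no growth information about $\zeta$ at all. The opposite implication, the necessity part taken from \cite[Th.~4.6]{Ela}, is the harder one: in the present language it is the Hardy-space assertion that $L^2$ boundary data must come from square-summable Taylor coefficients, an implication genuinely sensitive to the size of $g$ near $w=1$ --- equivalently, to the growth of $\zeta$ on vertical lines --- which is why I would take it as given rather than reprove it here.
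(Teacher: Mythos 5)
Your proposal is correct, and it reaches the corollary by a route that differs from the paper's in the one direction that is actually new. The paper clears the pole by working with $h_k(z)=z^k\zeta^k\left(\tfrac{1}{1+z}\right)=\sum_{n\ge 0}\ell_{n-k,k}z^n$ and runs a single chain of equivalences: $(\ell_{n,k})$ square-summable $\iff$ $h_k\in H^2(\mathbb{D})$ (Rudin, Th.~17.12) $\iff$ $\frac{1}{2\pi}\int_{\Re s=1/2}|\zeta(s)|^{2k}|s|^{-2}\,|\mathrm{d}s|<\infty$ $\iff$ the Lindel\"of Hypothesis (Titchmarsh 12.5 and 13.4). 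You instead keep the Laurent series with its pole at $w=0$, quote \cite[Th.~4.6]{Ela} for necessity (exactly as the paper itself does in the sentence preceding the corollary), and prove sufficiency directly by dilates, Parseval on the circles $|w|=r$, pointwise convergence on the unit circle minus the point $w=1$, and Fatou's lemma. Your route buys something real: the inequality $I_k\le\sum_{n\ge -k}\ell_{n,k}^2$ is all that sufficiency requires, and it sidesteps the delicate point that the paper's chain glosses over --- for a general holomorphic function, finiteness of the boundary $L^2$ integral does \emph{not} by itself imply membership in $H^2$ (singular inner factors concentrated at the exceptional boundary point, here $w=1$, i.e.\ $s=\infty$, are exactly the obstruction), so the implication ``$I_k<\infty\Rightarrow(\ell_{n,k})$ square-summable'' genuinely needs the growth input supplied by \cite[Th.~4.6]{Ela}, as you correctly observe. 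What the paper's route buys is brevity and an equality of the two quantities rather than an inequality. Both arguments rest on the same conformal change of variable $w=(s-1)/s$ and the same Titchmarsh criterion, so the difference is one of execution rather than of underlying idea.
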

\begin{proof}
Let $z=(1-s)/s$ then it is clear that $\sigma > 1/2$ if and only if $z \in \mathbb{D},$ where $\mathbb{D}$ denotes the open unit disk; thus, by Theorem \ref{Th1}, for any $k \in \mathbb{N}$ the function
$$h_k(z) := z^k\zeta^k\left( \frac{1}{1+z} \right) = \sum_{n \geq 0} \ell_{n-k,k}z^n $$
is analytic in $\mathbb{D}.$ Hence, by \cite[Th. 17.12]{Rud}, the sequence $(\ell_{n,k})_{n \in \mathbb{N}_0}$ is square-summable for any given $k\in \mathbb{N}$ if and only if $h_k$ belongs to Hardy space $H^2(\mathbb{D})$ for any given $k \in \mathbb{N};$ namely,
$$\|h_k\|_{H^2}^2:= \sup_{0\leq r<1}\frac{1}{2\pi}\int_{-\pi}^{\pi} \left|h_k\left(re^{i\theta}\right)\right|^2\mathrm{d}\theta = \frac{1}{2\pi} \int_{-\pi}^{\pi}\left|h_k\left(e^{i \theta}\right)\right|^2 \mathrm{d} \theta < +\infty $$
for any given $k \in \mathbb{N};$ or, 
$$\|h_k\|_{H^2}^2 = \frac{1}{2\pi} \int_{\Re s = \frac{1}{2}}\frac{|\zeta(s)|^{2k}}{|s|^2} |\mathrm{d}s|= \sum_{n \geq -k} \ell_{n,k}^2 < +\infty \qquad \forall k \in \mathbb{N} $$
which is equivalent, by combining \cite[Th. 12.5.]{Titch} and \cite[Th. 13.4.]{Titch}, to the truth of the Lindel\"{o}f Hypothesis. 
\end{proof}
\noindent We should not forget to mention that, if $(\ell_{n,k})_{n}$ is square-summable for a given $k \in \mathbb{N}$ then the associated function $h_k,$ defined in the proof of Corollary \ref{Coro}, belongs to the Hardy space $H^2(\mathbb{D})$ and consequently, by \cite[17.11]{Rud} and \cite{Car}, the series in Theorem \ref{Th1} converges almost everywhere in the critical line. However, the following theorem shows that this convergence holds compactly.
\begin{theorem}
If the sequence $(\ell_{n,k})_n$ is square-summable for a given $k \in \mathbb{N},$ then for all $t\in \mathbb{R}$ we have
\begin{equation}
\zeta^k\left( \frac{1}{2}+it \right) = \sum_{n \geq -k}\ell_{n,k}\left( \frac{\frac{1}{2}- it}{\frac{1}{2}+it}\right)^n .
\label{forexpkcl}
\end{equation}
In particular, if the Lindel\"{o}f Hypothesis is true then the expansion \eqref{forexpkcl} holds for every $k \in \mathbb{N}$ and any $t \in \mathbb{R}.$
\label{Th2}
\end{theorem}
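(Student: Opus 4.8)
The plan is to transfer the identity of Theorem~\ref{Th1} from the open half-plane $\sigma>1/2$ to its boundary, by reading it as a statement about a power series together with its analytic continuation across an arc of the unit circle. First I would reuse the substitution from the proof of Corollary~\ref{Coro}: put $z=(1-s)/s$, so that $\sigma>1/2\iff z\in\mathbb{D}$ while $\sigma=1/2\iff z\in\partial\mathbb{D}$; more precisely $s=\tfrac12+it$ corresponds to $z_t:=\bigl(\tfrac12-it\bigr)/\bigl(\tfrac12+it\bigr)$, and $t\mapsto z_t$ is a bijection from $\mathbb{R}$ onto $\partial\mathbb{D}\setminus\{-1\}$. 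By Theorem~\ref{Th1} the function $h_k(z)=z^k\zeta^k\!\bigl(1/(1+z)\bigr)=\sum_{n\geq0}\ell_{n-k,k}z^n$ is analytic in $\mathbb{D}$, and \eqref{forexpkcl} is precisely the claim that this power series converges at $z_t$ to $h_k(z_t)$, for every $t\in\mathbb{R}$.

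Next I would show that $h_k$ in fact continues analytically across the whole arc $\partial\mathbb{D}\setminus\{-1\}$. The map $z\mapsto 1/(1+z)$ is a M\"obius transformation, holomorphic on $\mathbb{C}\setminus\{-1\}$ and carrying $\partial\mathbb{D}\setminus\{-1\}$ onto the critical line; since $\zeta$, hence $\zeta^k$, is holomorphic on $\mathbb{C}$ except for a pole at $s=1$, the composition $z\mapsto\zeta^k\!\bigl(1/(1+z)\bigr)$ is holomorphic on $\mathbb{C}\setminus\{-1,0\}$ with a pole of order exactly $k$ at $z=0$ (because $1/(1+z)-1$ has a simple zero there), and this pole is killed by the factor $z^k$. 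Hence $z\mapsto z^k\zeta^k\!\bigl(1/(1+z)\bigr)$ is holomorphic on all of $\mathbb{C}\setminus\{-1\}$ and coincides with $h_k$ on $\mathbb{D}$; in particular $h_k$ extends holomorphically to a neighbourhood of every $z_0\in\partial\mathbb{D}\setminus\{-1\}$. This step is unconditional.

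The square-summability hypothesis then enters only through the trivial remark that $(\ell_{n,k})_n\in\ell^2$ forces $\ell_{n-k,k}\to0$ as $n\to\infty$. At this point I would invoke a classical theorem of Fatou on the boundary behaviour of power series (see, e.g., \cite{Titch2}): a power series that converges in $\mathbb{D}$, has coefficients tending to $0$, and extends holomorphically to a neighbourhood of a point $z_0\in\partial\mathbb{D}$, converges at $z_0$ to the value of the extension. Applying this to $\sum_{n\geq0}\ell_{n-k,k}z^n$ at $z_0=z_t$ gives convergence to $h_k(z_t)=z_t^k\zeta^k\!\bigl(1/(1+z_t)\bigr)$; dividing by $z_t^k$ (legitimate since $|z_t|=1$) and using $1/(1+z_t)=\tfrac12+it$ yields \eqref{forexpkcl} for every $t\in\mathbb{R}$. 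The concluding ``in particular'' is then immediate: if the Lindel\"{o}f Hypothesis holds, \cite[Th. 4.6]{Ela} (equivalently Corollary~\ref{Coro}) guarantees $(\ell_{n,k})_n\in\ell^2$ for all $k\in\mathbb{N}$, so \eqref{forexpkcl} holds for all $k$ and all $t$.

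The genuinely delicate point is this passage from the interior to the boundary: Abel's theorem does not suffice, since a priori we do not know the series converges on $\partial\mathbb{D}$, which is exactly why an analytic-continuation input such as Fatou's theorem is needed; square-summability is used solely to secure $\ell_{n,k}\to0$, the continuation across the arc being automatic. An alternative that remains within the Hardy-space language already used in the paper is to note, as in the proof of Corollary~\ref{Coro}, that $h_k\in H^2(\mathbb{D})$, so its boundary function $h_k^{*}$ has Fourier coefficients $(\ell_{n-k,k})_{n\geq0}$ and $0$ in negative degrees; by Fatou's theorem for $H^2$ and the continuity of the holomorphic extension found in the second step, $h_k^{*}$ agrees on $\partial\mathbb{D}\setminus\{-1\}$ with the real-analytic function $z\mapsto z^k\zeta^k\!\bigl(1/(1+z)\bigr)$, whereupon the Riemann localization principle forces the symmetric partial sums of the Fourier series of $h_k^{*}$ to converge at each $z_t$ to $h_k^{*}(z_t)$, giving the same identity.
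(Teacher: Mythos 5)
Your proposal is correct, and it takes a genuinely different route from the paper. The paper works entirely on the boundary circle: it uses $h_k\in H^2(\mathbb{D})$ to identify $(\ell_{n-k,k})$ as Fourier coefficients of the boundary function, writes the difference between the $N$-th partial sum at $z_0=z_t$ and $h_k(z_0)$ as an integral against the kernel $\bigl(1-(z_0e^{-i\theta})^{N+1}\bigr)/(1-z_0e^{-i\theta})$, kills the oscillatory part by the Riemann--Lebesgue lemma (after checking that $g_k(\theta)=(h_k(e^{i\theta})-h_k(z_0))/(1-z_0e^{-i\theta})$ is integrable), and then disposes of the remaining term $\frac{1}{2\pi}\int_{-\pi}^{\pi}g_k(\theta)\,\mathrm{d}\theta$ by a contour-shifting argument in the $s$-variable that relies on the bounds of Proposition \ref{propkk}. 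You instead observe that $z\mapsto z^k\zeta^k\bigl(1/(1+z)\bigr)$ is holomorphic on all of $\mathbb{C}\setminus\{-1\}$, so $h_k$ is regular at every point of $\partial\mathbb{D}\setminus\{-1\}$, and then invoke the classical Fatou (Fatou--Riesz) convergence theorem, which needs only $\ell_{n-k,k}\to 0$; this is in effect a packaged version of exactly the two analytic facts the paper proves by hand. Your argument is shorter, avoids Proposition \ref{propkk} entirely, and actually proves more: the hypothesis can be weakened from square-summability to $\ell_{n,k}\to 0$, and M.~Riesz's refinement gives uniform convergence on compact $t$-ranges. The paper's computation buys self-containedness and, as a by-product, the explicit identity that the principal-value integral of $g_k$ vanishes. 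One small caveat: make sure the reference for Fatou's convergence-at-regular-points theorem is precise (it is the Fatou--Riesz theorem, e.g.\ in Landau or Zygmund; the name ``Fatou's theorem'' in many texts, including parts of \cite{Rud}, refers instead to the radial-limit theorem); your secondary sketch via the localization principle is also workable but would need a cutoff to compare $h_k^{*}$ with a globally integrable smooth function.
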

\noindent Notice that, the series \eqref{forexpkcl} is conditionally convergent even if the sequence $(\ell_{n,k})_n$ is square-summable. Indeed, the convergence of $ \sum_{n \geq -k}|\ell_{n,k}|$ implies, by \eqref{forexpkcl} and Theorem \ref{Th1}, that $\zeta(s)$ is bounded in the strip $1/2\leq \sigma < 1$ which contradicts the falsity of Lindel\"{o}f's boundedness conjecture \cite[p. 184]{Edw}.  
 \subsection{Proof of theorems}    
We recall, for the sake of completeness, that $\zeta^k(s) = \sum_{n \geq 1}d_k(n)/n^s$ for all $\sigma > 1,$ where $d_k(n)$ denotes the number of expressions of $n \in \mathbb{N}$ as a product of $k$ factors; in particular $d_k(1)=1$ and $d_1(n)=1$ for any positive integer $n.$ Thus, by Abel's summation formula, we have for all $\sigma > \sigma_k$ and $s\neq 1$
\begin{equation}
\zeta^k(s) = s\sum_{j=0}^{k-1}\frac{ a_{j,k}}{(s-1)^{j+1}} + s\int_1^{+\infty} \frac{\Delta_k(x)}{x^{s+1}}  \mathrm{d}x;
\label{intrepzk}
\end{equation}
where $(1-k)/(2k) \leq \sigma_k \leq (k-1)/(k+1)$ is the average order of the error term in the divisor problem $\Delta_k;$ i.e. the least real number such that $\int_1^X\Delta_k^2(x)\mathrm{d}x = O(X^{2\sigma_k+1+\varepsilon})$ for any $\varepsilon >0$ ( see for example \cite[p. 322]{Titch}), 

$$\Delta_k(x) = \left(\sum_{ 1\leq n \leq x}d_k(n)\right) - xP_k\left(\log(x)\right) $$
and $ P_k(X) = \sum_{j=0}^{k-1}(a_{j,k}/j!)X^j$ is a polynomial of degree $k-1.$ Notice that, by using \eqref{Laurexpk} one can obtain the explicit form of the polynomials $(P_k)_{k \in \mathbb{N}}$ in terms of $(\lambda_{j,k});$ namely,
\begin{equation}
a_{j,k} = (-1)^{k-1-j}\sum_{i=0}^{k-1-j}(-1)^i\frac{\lambda_{i,k}}{i!},\qquad (j=0,\cdots ,k-1).
\label{Lavrik}
\end{equation}

We should not forget to mention that, as we shall show in the proof of Theorem \ref{Th1}, the integral in \eqref{intrepzk} is valid for all $\sigma > 1/2;$ however, its absolutely convergence for all $\sigma > 1/2$ and any $k \in \mathbb{N}$ is still open. Notice that, if $\alpha_k$ denotes the order of $\Delta_k$ then $\alpha_k\geq \sigma_k$ and the Lindel\"{o}f Hypothesis is equivalent to $\sigma_k = (k-1)/(2k)$ (or $\alpha_k \leq 1/2$), for any $k\in \mathbb{N}.$ Notice that, the most interesting part of this paper is given in Section 2, in which we shall show, among other things, that the distribution of values of $\Delta_k$ is strongly related to the Fourier coefficients $(\ell_{n,k})_{n\in \mathbb{N}_0}.$

\subsubsection{Proof of Theorem \ref{Th1}}
 Since the series 
 $$\left( \frac{s-1}{s}\right)\zeta(s) = \sum_{n \geq 0}l_{n}\left( \frac{s-1}{s} \right)^n,  $$
 where $l_0=1$ and $l_n = (-1)^{n-1}\ell_{n-1}$ for all $n \geq 1,$ is absolutely convergent for any complex number $s$ in the half-plane $\sigma > 1/2;$ then, by applying Cauchy product, see for example \cite[p. 32]{Titch2}, we obtain, for any $k\in \mathbb{N},$ 
 \begin{align*}
 \left( \frac{s-1}{s}\right)^k\zeta^k(s) &= \left(\sum_{n \geq 0}l_{n}\left( \frac{s-1}{s} \right)^n\right)^k \\ &= \sum_{n \geq 0}l_{n,k}\left( \frac{s-1}{s} \right)^n
 \end{align*}   
where $l_{n,1}=l_n$ and 
\begin{equation} 
l_{n,k} = \sum_{j=0}^n l_{j,k-1}l_{n-j}, \qquad k\geq 2 .
\label{CPl}
\end{equation}
Namely, for any $\sigma > 1/2$ and $s\neq 1,$
$$\zeta^k(s) = \sum_{n \geq -k} l_{n+k,k}\left( \frac{s-1}{s}\right)^n; $$
thus, by putting $l_{n+k,k}=(-1)^n \ell_{n,k},$ we have for all $\sigma > 1/2$ and $s\neq 1$
\begin{equation}
\zeta^k(s) = \sum_{n \geq -k} (-1)^n \ell_{n,k}\left( \frac{s-1}{s}\right)^n.
\label{lnk}
\end{equation}
Since $(l_n)$ is square-summable, then by applying the Cauchy-Schwarz inequality to \eqref{CPl} and by induction we obtain, for any given integer $k \geq 2,$
$$l_{n,k} = O\left( n^{\frac{k-2}{2}+\varepsilon} \right), \qquad \forall \varepsilon > 0. $$
Thus, the radius of convergence is exactly $1,$ since the Riemann zeta function is not bounded, in particularly, on the critical line. 

Now, since for any complex number $s\neq 1$ and any $j \in \mathbb{N}_0$ 
$$
 \left( \frac{1}{s-1} \right)^j = \left( \frac{s}{s-1}-1\right)^j = \sum_{i\geq 0}\binom{j}{i}(-1)^{j-i}\left( \frac{s}{s-1}\right)^i
$$
and for any complex $s\neq 1$
\begin{align*}
 s\sum_{j=0}^{k-1}\frac{a_{j,k}}{(s-1)^{j+1}} &= \sum_{j=0}^{k-1}\frac{a_{j,k}}{(s-1)^{j+1}} + \sum_{j=0}^{k-1}\frac{a_{j,k}}{(s-1)^{j}} \\ &=  \sum_{j=0}^{k}\frac{a_{j-1,k}+a_{j,k}}{(s-1)^{j}}
\end{align*} 
with the convention that $a_{-1,k}=a_{k,k}=0;$ then, for any complex number $s\neq 1,$
\begin{align*}
 s\sum_{j=0}^{k-1}\frac{a_{j,k}}{(s-1)^{j+1}} &=  \sum_{j=0}^{k}\sum_{i=0}^{k}(a_{j,k}+a_{j-1,k})\binom{j}{i}(-1)^{j-i}\left( \frac{s}{s-1}\right)^{i} \\ &=\sum_{i=1}^{k}\left(\sum_{j=i}^{k}\binom{j}{i}\left((-1)^ja_{j,k}-(-1)^{j-1}a_{j-1,k}\right)\right)(-1)^{i}\left( \frac{s}{s-1}\right)^{i}.
\end{align*} 
Hence, by \eqref{Lavrik}, for any $s\neq 1$
\begin{align*}
 s\sum_{j=0}^{k-1}\frac{a_{j,k}}{(s-1)^{j+1}}  &=\sum_{i=1}^{k}\left(\sum_{j=i}^{k}\binom{j}{i}(-1)^j \frac{\lambda_{k-j,k}}{(k-j)!}\right)(-1)^{i}\left( \frac{s}{s-1}\right)^{i}\\ &= \sum_{i=1}^{k}\left(\sum_{j=0}^{k-i}\binom{k-j}{i}(-1)^{k-i-j} \frac{\lambda_{j,k}}{j!}\right)\left( \frac{s}{s-1}\right)^{i}\\ &:= \sum_{n=-k}^{-1} c_{n+k,k}\left(\frac{s-1}{s} \right)^n
\end{align*} 
where, 
$$c_{n,k} = \sum_{j=0}^{n}\binom{k-j}{k-n}(-1)^{n-j}\frac{\lambda_{j,k}}{j!}, \qquad (n=0,\cdots, k-1) .$$
Therefore, the formula \eqref{intrepzk} can be rewritten , for any given $k \in \mathbb{N}$ and for all $\sigma > \sigma_k$ with $s\neq 1,$ as
\begin{equation}
\zeta^k(s) = \sum_{n=-k}^{-1} c_{n+k,k}\left(\frac{s-1}{s} \right)^n + s\int_1^{+\infty}\frac{\Delta_k(x)}{x^{s+1}} \mathrm{d}x .
\label{Hlbr}
\end{equation}
Since the integral in the right-hand side is absolutely convergent for any complex number $s$ in the half-plane $\sigma > \sigma_k$ (which is a domain containing $s=1$) then it represents an analytic function in the half-plane $\sigma > \sigma_k;$ thus, by \eqref{lnk}, we have
\begin{equation}
\ell_{n,k} = (-1)^n c_{n+k} = (-1)^k \sum_{j=0}^{n+k}\binom{k-j}{-n}(-1)^{j}\frac{\lambda_{j,k}}{j!} \qquad (-k \leq n \leq -1) ;
\label{Exprng}
\end{equation} 
and for all $\sigma > \sigma_k$
\begin{equation}
F_k(s):=s\int_1^{+\infty}\frac{\Delta_k(x)}{x^{s+1}} \mathrm{d}x = \sum_{n=0}^{+\infty}(-1)^n \ell_{n,k}\left( \frac{s-1}{s} \right)^n ,
\label{Extdlt}
\end{equation} 
which extends analytically the left-hand side integral of \eqref{Extdlt} to the half-palne $\sigma > 1/2.$ Remark that, this analytic extension is an important unconditional result since the behaviour of $\Delta_k,$ for all positive integers $k,$ is not yet understood completely.  

Finally, it remains to determinate the explicit forms of $\ell_{n,k}$ for $n \in \mathbb{N}_0.$ Since $F_k$ is analytic in $\sigma > \sigma_k,$ then we have near to $s=1,$
$$F_k(s) = \sum_{j=0}^{+\infty}\frac{F_k^{(j)}(1)}{j!}(s-1)^j $$
where $F_k^{(j)}(1)$ denotes the $j$th derivative of $F_k$ at $s=1;$ and by using \eqref{Lavrik} and \eqref{Laurexpk} we have $$F_k^{(0)}(1)=F_k(1)=\frac{\lambda_{k,k}}{k!}-a_{0,k} = (-1)^k\sum_{m=0}^{k}(-1)^m \frac{\lambda_{m,k}}{m!}  $$  
and for all $j \geq 1$
\begin{align*}
 F_k^{(j)}(1) &= \displaystyle \lim_{s \to 1}\frac{\mathrm{d}^j}{\mathrm{d}s^j}\left(\zeta^k(s) - \sum_{m=1}^{k}\frac{a_{m-1,k}+a_{m,k}}{(s-1)^m}\right)   \\ &=\displaystyle \lim_{s \to 1}\frac{\mathrm{d}^j}{\mathrm{d}s^j}\left(\zeta^k(s) - \sum_{m=0}^{k-1}\frac{\lambda_{m,k}}{m!}\frac{1}{(s-1)^{k-m}}\right)= \frac{\lambda_{j+k,k}}{(j+k)!}j!.
\end{align*}
Similarly, by using \eqref{Extdlt}, we obtain
$$\ell_{0,k} = F_k(1) = (-1)^k\sum_{m=0}^{k}(-1)^m \frac{\lambda_{m,k}}{m!} $$
and for $n \geq 1$
\begin{align*}
F_k^{(n)}(1) &= \displaystyle \lim_{s \to 1}\frac{\mathrm{d}^n}{\mathrm{d}s^n} \sum_{j=0}^{+\infty}(-1)^j \ell_{j,k}\left( \frac{s-1}{s} \right)^j \\&=\displaystyle \lim_{s \to 1} \sum_{j=1}^{n}(-1)^j \ell_{j,k} \frac{\mathrm{d}^n}{\mathrm{d}s^n} \left( \frac{s-1}{s} \right)^j \\&= n!(-1)^n\sum_{j=1}^n\binom{n-1}{j-1} \ell_{j,k};
\end{align*}
then, for all $n \geq 1,$
$$\sum_{j=1}^n\binom{n-1}{j-1} \ell_{j,k} = (-1)^n\frac{\lambda_{n+k,k}}{(n+k)!}; $$
which is equivalent by using binomial transform, as in \cite[p.125]{Ela}, to
$$\ell_{n,k} = (-1)^n\sum_{j=1}^n \binom{n-1}{j-1}\frac{\lambda_{j+k,k}}{(j+k)!} \qquad (n\geq 1). $$
Remark that, the case of $n=0$ can be included in the expression \eqref{Exprng} and the proof of Theorem \ref{Th1} is complete.

\subsubsection{Proof of Theorem \ref{Th2}} 
Let $k \in \mathbb{N}$ such that $(\ell_{n,k})_{n}$ is square-summable. Then by \cite[Th. 17.12]{Rud} the holomorphic function $h_k(z)$ defined in the proof of Corollary \ref{Coro} belongs to the Hardy space $H^2(\mathbb{D})$ and we have
$$\ell_{n-k,k} = \frac{1}{2\pi} \int_{-\pi}^{\pi} h_k\left(e^{i\theta}\right)e^{-in\theta} \mathrm{d}\theta \qquad (n\in \mathbb{N}_0).$$
Let $t_0\in \mathbb{R}$ then, by putting $z_0 = (1-s_0)/s_0$ where $s_0 = 1/2 +it_0,$ we have for any $N\in \mathbb{N}$
$$\sum_{n=0}^N\ell_{n-k,k}z_0^n - h_k(z_0) = \frac{1}{2\pi}\int_{-\pi}^{\pi}\frac{h_k\left(e^{i\theta}\right)-h_k(z_0)}{1-z_0e^{-i\theta}}\left(1- (z_0e^{-i\theta})^{N+1}\right)\mathrm{d}\theta . $$
Since $|\arg(z_0)|<\pi$ and $\theta \mapsto h_k(e^{i \theta})$ is differentiable on $(-\pi, \pi)$ then the function 
$$g_k(\theta) := \frac{h_k(e^{i \theta}) - h_k(z_0)}{1-z_0e^{-i \theta}} $$
is square-integrable on $(-\pi , \pi);$ hence,
$$\lim_{N\to +\infty} \frac{1}{2\pi}\int_{-\pi}^{\pi}g_k(\theta)e^{-i(N+1)\theta}\mathrm{d}\theta = 0 $$ 
which implies that
$$\sum_{n=0}^{+\infty}\ell_{n-k,k}z_0^n - h_k(z_0) = \frac{1}{2\pi} \int_{-\pi}^{\pi}g_k(\theta) \mathrm{d}\theta. $$  
Now, by substituting $t=-\tan(\theta/2)/2,$ we obtain
\begin{align*}
\frac{1}{2\pi}\int_{-\pi}^{\pi}g_k(\theta) \mathrm{d}\theta &= \frac{s_0}{2\pi i} \int_{-\infty}^{+\infty}\frac{Z_k\left( \frac{1}{2}+it \right)-Z_k(s_0)}{(\frac{1}{2}+it)(t_0-t)}\mathrm{d}t \\ &=\frac{s_0}{2\pi i}\int_{\Re s = \frac{1}{2}}\frac{Z_k(s)-Z_k(s_0)}{s(s_0-s)}\mathrm{d}s,
\end{align*}
 where, for the reason of simplification,
$$Z_k(s) := h_k\left( \frac{1-s}{s}\right) = \left( \frac{1-s}{s}\right)^k\zeta^k(s). $$
Since the integrand is holomorphic in the half-plane $\sigma \geq 1/2,$ then by Cauchy's integral theorem 
$$\frac{s_0}{2\pi i} \oint_{\mathcal{C}_{R,T}}\frac{Z_k(s)-Z_k(s_0)}{s(s_0-s)}\mathrm{d}s = 0 $$
where $\mathcal{C}_{R,T}$ denotes the counter-clockwise oriented rectangular contour with vertices $1/2+iT,$ $1/2-iT,$ $R-iT$ and $R+iT$ where $R\geq 2$ and $T > 2|t_0|$ are sufficiently large numbers; thus,
$$
\frac{s_0}{2\pi i} \int_{-T}^{T}\frac{Z_k\left( \frac{1}{2}+it \right)-Z_k(s_0)}{(\frac{1}{2}+it)(t_0-t)}\mathrm{d}t  = I(R,T) - J(R,T) + J(R,-T)
$$ 
where
$$I(R,T) = \frac{s_0}{2\pi } \int_{-T}^{T}\frac{Z_k\left( R+it \right)-Z_k(s_0)}{(R+it)\left(s_0 -R-it\right)}\mathrm{d}t  $$
and
$$J(R,T) = \frac{s_0}{2\pi i} \int_{\frac{1}{2}}^{R}\frac{Z_k\left( \sigma +iT \right)-Z_k(s_0)}{(\sigma +iT)\left(s_0-\sigma-iT\right)}\mathrm{d}\sigma  .$$
Thus, by using the Cauchy-Schwarz inequality and the fact that, $|Z_k(R+it)|\leq \zeta^k(2)$ for any $t \in \mathbb{R}$ we have uniformly, for all $T> 2|t_0|,$
\begin{align*}
|I(R,T)| &\leq \frac{|s_0|(\zeta^k(2)+|\zeta^k(s_0)|)}{2\pi} \int_{-T}^{T}\frac{\mathrm{d}t}{|R+it||s_0-R-it|} \\&=O\left( \frac{1}{2R-1} \right).
\end{align*}
Also, since
$$Z_k(\sigma \pm iT) \leq \begin{cases}\begin{array}{ccl}C_k^k \frac{|2 + iT|}{\sqrt{2\sigma - 1}}  &\mbox{if}&  \frac{1}{2}<\sigma < 2 \\ \zeta^k(2)& \mbox{if} &\sigma \geq 2 \end{array}\end{cases}$$
then, we have uniformly
$$|J(R,\pm T)| = O\left( \frac{1}{T} \right).$$  
Therefore, by letting $T\to +\infty$ and $R\to +\infty$ we obtain
$$\frac{1}{2\pi}\int_{-\pi}^{\pi}g_k(\theta) \mathrm{d}\theta = \frac{s_0}{2\pi i} \int_{-\infty}^{+\infty}\frac{Z_k\left( \frac{1}{2}+it \right)-Z_k(s_0)}{(\frac{1}{2}+it)(t_0-t)}\mathrm{d}t=0$$
which completes the proof of Theorem \ref{Th2}.
\section{Extension to the Piltz divisor problem}
Let us start with the following integral representation of the Fourier coefficients $(\ell_{n,k})_{n \in \mathbb{N}_0}.$
\begin{proposition}
For any given $k \in \mathbb{N}$ and for all $n\in \mathbb{N}_0,$ we have
$$(-1)^n \ell_{n,k} = \int_1^{+\infty} \Delta_k(x)\mathcal{L}_n(x)\mathrm{d}w(x), $$
where $(\mathcal{L}_j)_{j \in \mathbb{N}_0}$ is the orthonormal basis in the Hilbert space $\mathcal{H}_0$ defined in Section 1. 
\label{propint}
\end{proposition}
\begin{proof}
Let $k \in \mathbb{N}.$ For $n=0,$ we have
$$\ell_{0,k} = F_k(1) = \int_1^{+\infty}\Delta_k(x) \mathrm{d}w(x) = \int_1^{+\infty} \Delta_k(x) \mathcal{L}_0(x) \mathrm{d}w(x); $$
where $F_k(s)$ is defined in the proof of Theorem \ref{Th1} and for all $n\in \mathbb{N}_0$, $x\geq 1$ 
$$\mathcal{L}_n(x) = \sum_{j=0}^n \binom{n}{j}\frac{(-1)^j}{j!}\log^j(x); $$
see \cite{Ela2} for more details about $(\mathcal{L}_n)_{n\in \mathbb{N}_0}$. For $n \in \mathbb{N},$ we have
$$(-1)^n\ell_{n,k} = \sum_{j=1}^n\binom{n-1}{j-1} \frac{\lambda_{j+k,k}}{(j+k)!} = \sum_{j=1}^n\binom{n-1}{j-1} \frac{F_k^{(j)}(1)}{j!}  $$
and since 
 $$F_k^{(j)}(1) = (-1)^j \int_1^{+\infty}\Delta_k(x)\left(\log^j(x) - j \log^{j-1}(x)\right)\mathrm{d}w(x) \qquad (j \in \mathbb{N}) $$ 
then
$$(-1)^n\ell_{n,k} = \int_1^{+\infty}\Delta_k(x)\sum_{j=1}^n\binom{n-1}{j-1}(-1)^j \left(\frac{\log^j(x)}{j!} -  \frac{\log^{j-1}(x)}{(j-1)!}\right)\mathrm{d}w(x);$$
and the Pascal identity
$$\binom{n-1}{j-1}+ \binom{n}{j-1} = \binom{n}{j}$$
completes the proof.
\end{proof}
Therefore, the error term function $\Delta_k$ belongs to $\mathcal{H}_0,$ for a given $k\in \mathbb{N},$ if and only if $(\ell_{n,k})_{n \in \mathbb{N}_0}$ is square-summable; and we have 
$$\|\Delta_k\|_{\mathcal{H}_0}^2 = \int_1^{+\infty}\left(\frac{\Delta_k(x)}{x}\right)^2\mathrm{d}x = \sum_{n \geq 0}\ell_{n,k}^2 .$$
Moreover, if $\Delta_k \in \mathcal{H}_0,$ for some $k\in \mathbb{N},$ then the equality 
\begin{equation}
 \Delta_k(x) = \sum_{n=0}^{+\infty}(-1)^n\ell_{n,k}\mathcal{L}_n(x) 
\label{FLexp}
\end{equation}
holds in $\mathcal{H}_0$ and almost everywhere on $(1,+\infty),$ by \cite{Car} and \cite[Th. 1]{Muc} or \cite[Th. 9.1.5]{Sz}; thus, by \cite[Th. 12.8]{Titch}, the series in \eqref{FLexp} converges almost everywhere for any $k\in [|1,4|].$ Remark that, if the series in \eqref{FLexp} converges at some $x\geq 1$ to $\Delta_k\in \mathcal{H}_0$ then, by \cite[Th. 8.22.1]{Sz}, there exists $n_0 \in \mathbb{N}$ such that
$$\Delta_k(x) = \frac{\sqrt{x}}{\sqrt{\pi}\log^{\frac{1}{4}}(x)} \sum_{n > n_0}\frac{(-1)^n\ell_{n,k}}{n^{\frac{1}{4}}}\cos\left( 2\sqrt{n\log(x)}- \frac{\pi}{4} \right) + Q_{k,n_0}(\log(x))+O(1) $$
where $Q_{k,n_0}$ is a polynomial of degree $n_0$ and the error term depends only on $k.$ However, even if the series in \eqref{FLexp} converges pointwisely, the convergence could not be uniform since the uniform limit of continuous functions is continuous.

Now, since $\Delta_k \in \mathrm{L}^1(\mathrm{d}w(x)),$ for any $k \in \mathbb{N},$ then, by \cite{Muc2}, its Poisson integral
$$\psi_k(x,\rho) = \int_1^{+\infty}\Delta_k(y)K(x,y,\rho)\mathrm{d}w(y) \qquad x\geq 1, \ \rho \in [0,1)$$
exists for all $\rho \in [0,1)$ and converges almost everywhere to $\Delta_k(x)$ as $\rho \to 1^{-};$ where
$$K(x,y,\rho) = \sum_{n=0}^{+\infty}\mathcal{L}_n(x)\mathcal{L}_n(y)\rho^n = \frac{(xy)^{-\frac{\rho}{1-\rho}}}{1-\rho} I_0\left( \frac{2\sqrt{\rho \log(x)\log(y)}}{1-\rho}\right)  $$   
and $I_0(2v) = \sum_{n \geq 0}v^{2n}/(n!)^2$ denotes the modified Bessel function of the first kind of order $0.$ Thus, we obtain the following result. 

\begin{theorem}
For any given $k\in \mathbb{N}$ and almost all $x\geq 1,$ we have
$$\Delta_k(x) = \lim_{\rho \to 1^-} \sum_{n=0}^{+\infty}(-1)^n\ell_{n,k}\mathcal{L}_n(x)\rho^n.$$
Moreover, the Lindel\"{o}f Hypothesis holds true if and only if the sequences $(\ell_{n,k}^2)_{n \geq 0}$ are Abel summable for every $k\in \mathbb{N}.$
\end{theorem}
\begin{proof}
 It follows, by Proposition \ref{propint} and \cite[Lem. 4]{Muc2}, that for each $\rho\in [0,1)$ and any given $x\geq 1$ and $k\in \mathbb{N},$ the Poisson integral $\psi_k(x,\rho)$ has the expansion  
\begin{equation}
\psi_k(x,\rho) = \sum_{n=0}^{+\infty}(-1)^n\ell_{n,k}\mathcal{L}_n(x)\rho^n .
\label{PoisLag}
\end{equation}
Notice that, by \cite[Th. 8.22.1]{Sz} and the fact that $\ell_{n,k} = O(n^{(k-1)/2+ \varepsilon}),$ $\forall\varepsilon > 0 ,$ the series in \eqref{PoisLag} is absolutely convergent for all $\rho \in [0,1)$ and any $x\geq 1.$ Hence, by \cite[Th. 3]{Muc2} and since $\Delta_k \in \mathrm{L}^1(\mathrm{d}w(x)),$ the following limit holds almost everywhere
$$\Delta_k(x) = \lim_{\rho \to 1^-} \sum_{n=0}^{+\infty}(-1)^n\ell_{n,k}\mathcal{L}_n(x)\rho^n.$$

Moreover, since the sequences $(\ell_{n,k}\rho^n)_{n \geq 0}$ are square-summable for every $k \in \mathbb{N}$ and all $\rho \in [0,1)$ then $\psi_k(\cdot , \rho) \in \mathcal{H}_0$ and for all $\rho \in [0,1)$
$$\|\psi_k(\cdot, \rho)\|_{\mathcal{H}_0}^2 = \sum_{n = 0}^{+\infty}\ell_{n,k}^2 \rho^{2n}. $$
So that, the Lindel\"{o}f Hypothesis is true if and only if $ \|\psi_k(\cdot, \rho)\|_{\mathcal{H}_0}$ converges as $\rho \to 1^-,$ for every $k \in \mathbb{N};$ in this case, the limit must be $\|\Delta_k\|_{\mathcal{H}_0}.$ 
\end{proof}

Finally, we consider the function $\phi_k$ defined, for any given $k \in \mathbb{N},$ on $[0,+\infty)$ by 
$$\phi_k(x) = \int_1^{+\infty} \Delta_k(r) J_0\left(2\sqrt{x\log(r)} \right)\mathrm{d}w(r); $$
where $J_0$ denotes the Bessel function of the first kind and order $0;$ see for example \cite[p. 15]{Sz}. We point out that the integral in the right-hand side is absolutely convergent for all $x\geq 0$ and it follows by the asymptotic formula for $J_0$ given in \cite[eq. 1.71.7]{Sz} that the function $\phi_k$ is continuous and bounded for any $k\in \mathbb{N};$ more precisely, we have
$$\phi_k(x) = O\left( x^{-\frac{1}{4}}\right) $$ 
uniformly as $x \to +\infty .$ Moreover, by using \cite[eq. 5.1.16]{Sz} and Proposition \ref{propint} we obtain, for all $x \geq 0$ and any $k\in \mathbb{N},$ 
$$\phi_k(x) = e^{-x}\sum_{n=0}^{+\infty}\frac{(-1)^n\ell_{n,k}}{n!}x^n $$
which implies that the sequences $(\ell_{n,k})$ converge to $0$ in the Borel sense, for any positive integer $k.$ Notice that, the radius of convergence of the series above is $+\infty,$ since $(\ell_{n,k})_n$ has a polynomial order. \\
In fact, the function $\phi_k$ represents a modified form of Hankel transform of the function $\Delta_k(r)/r,$ for each $k\in \mathbb{N};$ which is invertible and its inverse is given, for almost all $r\geq 1$ and any given $k\in \mathbb{N},$ by
$$\frac{\Delta_k(r)}{r} = \int_0^{+\infty}\phi_k(x)J_0\left(2\sqrt{x\log(r)}\right)\mathrm{d}x .$$
Thus, by Parseval's identity we have, for any $k \in \mathbb{N},$
$$ \|\phi_k\|_{2}^2:= \int_0^{+\infty} \phi_k^2(x) \mathrm{d}x = \int_1^{+\infty}\frac{\Delta_k^2(r)}{r^3}\mathrm{d}r < +\infty, $$
which implies that $\phi_k \in \mathrm{L}^2(\mathbb{R}_+).$ Hence, if we denote by $\phi_{N,k}(x)$ the partial sum of $\phi_k(x);$ i.e.  $\phi_{N,k}(x)= e^{-x}\sum_{n=0}^N(-1)^n\ell_{n,k}/n! x^n,$ then we obtain the following result.
\begin{theorem}
The Lindel\"{o}f Hypothesis is true if and only if $(\phi_{N,k})_{N\in \mathbb{N}_0}$ converges in $\mathrm{L}^2(\mathbb{R}_+)$ to $\phi_k,$ for any $k\in \mathbb{N}.$
\end{theorem}  
\begin{proof}
Let $k \in \mathbb{N}.$ It follows by \cite[Th. 5.4]{Sz} that for all $r\geq 1$ and any $N\in \mathbb{N}$
$$\frac{1}{r}\sum_{n=0}^N(-1)^n\ell_{n,k}\mathcal{L}_n(r)  = \int_0^{+\infty}\phi_{N,k}(x) J_0\left(2\sqrt{x\log(r)}\right)\mathrm{d}x ;$$
thus, for almost all $r\geq 1$ we have
$$\frac{\Delta_k(r)}{r} - \frac{1}{r}\sum_{n=0}^N(-1)^n\ell_{n,k}\mathcal{L}_n(r)  = \int_0^{+\infty}\left(\phi_k(x)-\phi_{N,k}(x)\right) J_0\left(2\sqrt{x\log(r)}\right)\mathrm{d}x .$$
Then, by Parseval theorem we obtain
$$\|\phi_k - \phi_{N,k}\|_2^2 = \int_1^{+\infty}\left|\Delta_k(r) -  \sum_{n=0}^N(-1)^n\ell_{n,k}\mathcal{L}_n(r)\right|^2\frac{\mathrm{d}r}{r^3}. $$
Therefore, if the Lindel\"{o}f Hypothesis is true then
$$\|\phi_k - \phi_{N,k}\|_2 \leq \left\|\Delta_k - \sum_{n=0}^N(-1)^n\ell_{n,k}\mathcal{L}_n \right\|_{\mathcal{H}_0} ;$$
which implies the convergence of $(\phi_{N,k})_{N\in \mathbb{N}_0}$ in $\mathrm{L}^2(\mathbb{R}_+)$ to $\phi_k,$ for any $k\in \mathbb{N}.$

Reciprocally, we assume that $(\phi_{N,k})_{N\in \mathbb{N}_0}$ in $\mathrm{L}^2(\mathbb{R}_+)$ to $\phi_k,$ for any $k\in \mathbb{N}$ and let $\sigma \in (0,1/2);$ then by Cauchy-Schwarz inequality, we have
\begin{align*}\int_0^{+\infty}|\phi_k(x) - \phi_{N,k}(x)|x^{\sigma - 1}\mathrm{d}x &= \left\{\int_0^1 + \int_1^{+\infty}\right\}|\phi_k(x) - \phi_{N,k}(x)|x^{\sigma - 1}\mathrm{d}x \\&\leq \sum_{n=N}^{+\infty}\frac{|\ell_{n,k}|}{n!} + \frac{\left\| \phi_k - \phi_{N,k} \right\|_2}{\sqrt{1-2\sigma}}.
\end{align*}   
Since, for all $\Re(s)=\sigma \in (0,1/2)$
\begin{align*}
\left|\tilde{\phi}_k(s) - \sum_{n=0}^N\frac{(-1)^n\ell_{n,k}}{n!}\Gamma(s+n)\right| &= \left| \int_0^{+\infty}(\phi_k(x) - \phi_{N,k}(x))x^{s - 1}\mathrm{d}x\right| \\ &\leq \sum_{n=N}^{+\infty}\frac{|\ell_{n,k}|}{n!} + \frac{\left\| \phi_k - \phi_{N,k} \right\|_2}{\sqrt{1-2\sigma}},
\end{align*}
where $\tilde{\phi}_k$ denotes the Mellin transform of $\phi_k;$ then, for all $\sigma \in (0,1/2)$ and any given $k\in \mathbb{N},$
$$\sum_{n=0}^{+\infty} \frac{(-1)^n\ell_{n,k}}{n!}\Gamma(s+n) = \tilde{\phi}_k(s) $$
which implies, for any given $k \in \mathbb{N},$ that
$$\ell_{n,k} = o\left( \frac{n!}{\Gamma(\sigma+n)}\right) = o(n)  $$
uniformly as $n \to +\infty;$ namely, $-1\leq \beta_k < 1,$ and hence the Lindel\"{o}f Hypothesis holds. 

Remark that the Mellin transform $\tilde{\phi}_k(s)$ is well-defined and analytic in the strip $0< \sigma < 1/2$ since $\phi_k$ is continuous and $\phi_k(x)=O(x^{-1/4})$ as $x\to +\infty.$ Moreover, by using \cite[eq. 7.4.1]{Titch3}, we obtain, for all $\sigma \in (0,1/4)$ and any given $k\in \mathbb{N},$ 
$$\tilde{\phi}_k(s) = \frac{\Gamma(s)}{\Gamma(1-s)}\int_1^{+\infty}\frac{\Delta_k(r)}{\log^s(r)} \mathrm{d}w(r) ;$$
which is valid for all $\sigma \in (0,1)$ and extends $\tilde{\phi}_k(s)$ analytically to the half-plane $\sigma < 1$ except at simple poles $s=-m$ ($m \in \mathbb{N}_0).$ Actually, one can show that the analytic extension of $\tilde{\phi}_k$ holds in the whole complex plane except at $s=-m.$   
\end{proof}

\end{document}